\documentclass[submission,copyright,creativecommons]{eptcs}
\providecommand{\event}{GASCom 2026}

\usepackage{iftex}

\ifpdf
  \usepackage{underscore}
  \usepackage[T1]{fontenc}
\else
  \usepackage{breakurl}
\fi

\usepackage{amsthm}
\usepackage{amsmath}
\usepackage{amssymb}
\usepackage{hyperref}
\usepackage{graphicx}
\usepackage{tikz}
\usepackage[english]{babel}
\usepackage{subcaption}
\usepackage{adjustbox}
\usepackage{float}
\usepackage{bbm}
\usepackage{caption}
\usepackage{makecell}

\usepackage[normalem]{ulem}
\usepackage{todonotes}

\newtheorem{proposition}{Proposition}
\newtheorem{theorem}{Theorem}
\newtheorem{lemma}{Lemma}

\newtheorem{proof*}{Proof}
\newtheorem{definition}{Definition}

\title{Penrose P2 Tilings: A Study of Fully Leafed Induced Subtrees}

\author{Mathieu Cloutier\thanks{Supported by NSERC and FRQNT.}
  \institute{Université du Québec à Trois-Rivières\\ Trois-Rivières, Québec}
  \email{mathieu.cloutier4@uqtr.ca}
\and
Alain Goupil\thanks{Supported by NSERC.}
  \institute{LACIM, Montréal, Québec}
  \institute{Université du Québec à Trois-Rivières\\ Trois-Rivières, Québec}
  \email{alain.goupil@uqtr.ca}
\and 
Alexandre Blondin Massé\thanks{Supported by NSERC.}
  \institute{LACIM, Montréal, Québec}
  \institute{Université du Québec à Montréal\\ Montréal, Québec}
  \email{blondin\_masse.alexandre@uqam.ca}
}

\def\titlerunning{Penrose P2 Tilings: A Study of Fully Leafed Induced Subtrees}
\def\authorrunning{M. Cloutier, A. Goupil \& A. Blondin Massé}

\begin{document}
\maketitle

\begin{abstract}
We present new results about fully leafed induced subtrees in Penrose P2 tilings, also known as kites and darts tilings. We first determine the graph structure of these subtrees and show that they are caterpillars up to an appendix of at most six tiles. In other words, if we remove their degree one vertices, then they are path graphs up to an additional connected path of at most two tiles. We then study bi-infinite fully leafed induced caterpillars in P2 tilings and their geometric properties. In particular, we refute the conjecture proposed by C. Porrier, A. Goupil and A. Blondin Massé that there is a unique bi-infinite fully leafed caterpillar in Penrose P2 tilings.
\end{abstract}

\section{Introduction}
Let us play a mathematical game. Consider a tiling $\mathcal{T}$ and a positive integer $n$. For a patch of tiles $P$ in $\mathcal{T}$, a tile of $P$ is said to be a degree one tile for $P$ if it is connected to only one other tile of $P$. These degree one tiles can be viewed as extremities of $P$. The goal of the game is to find a connected and acyclic patch of $n$ tiles in $\mathcal{T}$ that maximizes the number of degree one tiles. These winning patches correspond to what we call fully leafed induced subtrees. These structures have been studied in periodic tilings \cite{blondin2018fully, masse2018saturated}. In this paper, we are interested in describing these structures in a family of aperiodic tilings. More precisely, the tilings considered in this paper are the Penrose kites and darts tilings, which we denote by the P2 tilings. Interestingly, fully leafed induced subtrees can be used in chemistry to study adsorption on surfaces with a maximal number of boundary sites \cite{Madras2017}. Adsorption is the process by which particles adhere to a surface. Since Penrose tilings model quasicrystals, the results presented here may be of interest in the study of adsorption on quasicrystal surfaces \cite{madison2013symmetry, mcgrath2010surface, senechal1996quasicrystals}.

Section 2 presents the definitions needed for this work. In Section 3, we establish the graph structure of fully leafed induced subtrees in Penrose P2 graphs. Section 4 describes the construction of fully leafed induced subtrees, taking into account the geometry of the tilings in which they are embedded. Section 5 studies the construction of bi-infinite fully leafed induced caterpillars. 

Complete proofs of the new results presented here, as well as additional results on the same topic, can be found in the extended version of this paper \cite{cloutier2026fullyleafedinducedsubtrees}.

\section{Definitions}

Basic definitions from graph theory and tilings can be found in \cite{diestel2025graph} and \cite{grunbaum1987tilings}. The vertices of degree one in a tree $T$ are called the \textbf{leaves} of $T$.
For a graph $G$, the graph $G'$ obtained by removing the leaves from $G$ is called the \textbf{derived} graph of $G$. A \textbf{caterpillar} in $G$ is a subgraph $\mathcal{C}$ of $G$ such that $\mathcal{C}'$ is a chain.

\begin{definition}{(\cite{masse2018saturated})}
    A \textbf{fully leafed induced subtree} of order $n$ of a graph $G$ is an induced subtree of $G$ that maximizes the number of leaves among all induced subtrees of order $n$ in $G$.
\end{definition}

A Penrose P2 tiling, shortly a P2 tiling, is a tiling of the plane constructed by two sorts of tiles, called \textbf{prototiles}, described in Figure \ref{fig:prototuiles}. We know that P2 tilings are aperiodic \cite{grunbaum1987tilings}. A \textbf{patch} is a finite connected set of tiles that respects the matching rules. We say a patch $P$ is \textbf{valid} if, when $P$ is extended by the tiles that are forced to be placed according to the matching rules, the resulting set of tiles is still a patch.

We call \textbf{P2-graph} the dual graph of a P2 tiling. So we often use the term \textit{tile} to denote a vertex of a P2-graph. All graphs considered in this paper are induced subgraphs of P2-graphs, so we may not mention it every time. In this paper, we want to describe the graph structure of the fully leafed induced subtrees in P2-graphs. We know that P2 tilings have the local isomorphism property, which implies that every valid P2 patch appears in any P2 tiling \cite{grunbaum1987tilings}. Then, by the local isomorphism property, the fully leafed induced subtrees do not depend on a specific P2 tiling in which they would be embedded. 

\begin{figure}[H]
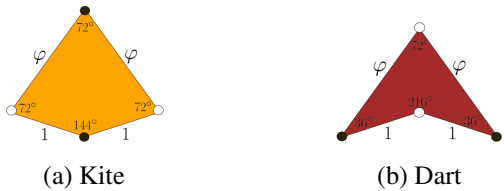

    \centering
    \begin{subfigure}{0.15\textwidth}
        \centering
        \includegraphics[width=\textwidth]{Figures3/Kite1.PNG}
        \caption{Kite}
    \end{subfigure} 
    \hspace{2cm}
    \begin{subfigure}{0.15\textwidth}
        \centering
        \includegraphics[width=\textwidth]{Figures3/Dart1.PNG}
        \caption{Dart}
    \end{subfigure}
    \caption{Prototiles of P2 tilings. Tiles are edge-adjacent only when black (resp. white) vertices coincide. The golden ratio $\varphi = \frac{1+\sqrt{5}}{2}$ is the ratio of long to short edges in both types of tile.}
    \label{fig:prototuiles}
\end{figure}

Two important patches of P2 tilings will be used throughout this paper: the star and the sun. They are presented in Figure \ref{fig:Two P2 patches}. 

\begin{figure}[H]
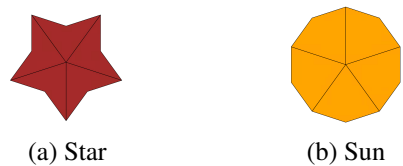

    \centering
    \begin{subfigure}{0.1\textwidth}
        \centering
        \includegraphics[width=\textwidth]{Figures3/Star.png}
        \caption{Star}
    \end{subfigure}
    \hspace{2cm}
    \begin{subfigure}{0.1\textwidth}
        \centering
        \includegraphics[width=\textwidth]{Figures3/Sun.png}
        \caption{Sun}
    \end{subfigure}
    \caption{Two P2 patches}
    \label{fig:Two P2 patches}
\end{figure}

An important concept of Penrose tilings is inflation \cite{grunbaum1987tilings}. Consider a kite (resp. a dart), that we have outlined by a blue dashed contour in Figure \ref{fig:inflation cerf-volant} (resp. \ref{fig:inflation fléchette}). The \textbf{inflation} of this kite (resp. this dart) is the patch of 4 tiles (resp. 3 tiles) that are $\varphi$ times smaller than the original tiles, represented in brown and orange in Figure \ref{fig:inflation cerf-volant} (resp. \ref{fig:inflation fléchette}). The inflation of a P2 patch $P$ is defined as the patch obtained by inflating all the P2 tiles of $P$. By doing an infinite number of inflations from a given patch and by rescaling the dimension at each inflation, we can construct a P2 tiling.

\begin{figure}[H]
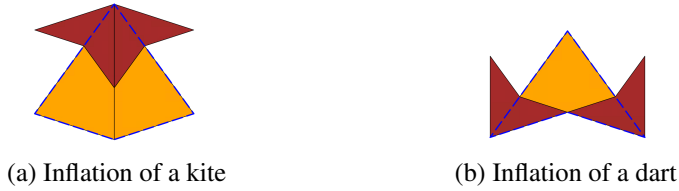

    \centering
    \begin{subfigure}{0.25\textwidth}
        \centering
        \includegraphics[width=0.6\textwidth]{Figures3/Kiteinflation.png}
        \caption{Inflation of a kite}
        \label{fig:inflation cerf-volant}
    \end{subfigure} 
    \hspace{2cm}
    \begin{subfigure}{0.25\textwidth}
        \centering
        \includegraphics[width=0.6\textwidth]{Figures3/Dartinflation.png}
        \caption{Inflation of a dart}
        \label{fig:inflation fléchette}
    \end{subfigure}
    \caption{Inflation of P2 tiles}
\end{figure}

\section{Graph structure}

It has been shown that every fully leafed induced subtree of a P2-graph that has 8 internal tiles (i.e., tiles of degree 2 or more) or fewer is a subcaterpillar of a caterpillar shown in Figure \ref{fig:chenilles premières} \cite{porrier2023leaf}. In this paper, we call a \textbf{prime caterpillar} such a fully leafed induced subtree of 8 internal tiles. We notice that internal tiles of prime caterpillars are all of degree 3. It has been verified that a fully leafed induced subtree of 9 internal tiles or more must have at least one tile of degree 2 \cite{porrier2023leaf}. We know that an induced subtree of a P2-graph is fully leafed when its number of degree three tiles is maximal, or equivalently, when its number of degree two tiles is minimal, because a P2 tile of a tree has degree 3 or less \cite{cloutier2026fullyleafedinducedsubtrees, porrier2019leaf, porrier2023leaf}. Since prime caterpillars are the largest fully leafed induced subtrees with all internal tiles of degree 3, in order to construct a fully leafed induced subtree, we want to maximize the number of prime caterpillars in it. Thus, prime caterpillars are primitive structures that we want to join together to form fully leafed induced subtrees.

\begin{definition}[\cite{masse2018saturated,porrier2023leaf}]
Let $I_1$, $I_2$ be two fully leafed induced subtrees and $t$  a tile such that $t$ is a leaf of $I_1$ and $I_2$, $I_1\cup I_2$ is a fully leafed induced subtree and $I_1\cap I_2=\{t\}$. Then we say that $I=I_1\cup I_2$ is a \textbf{grafting} of $I_1$ and $I_2$ at $t$ and we write $I=I_1\diamond _t I_2$, or just $I=I_1\diamond I_2$.
\end{definition}

\begin{figure}[H]
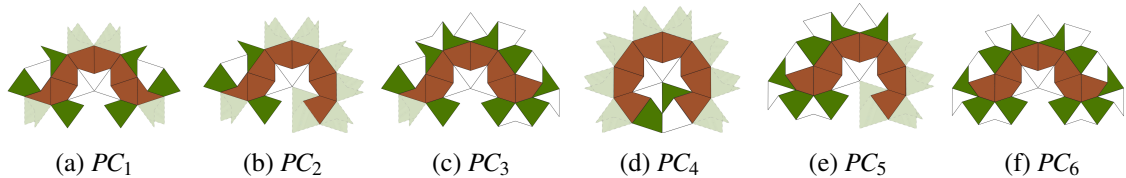

    \centering
    \begin{subfigure}{0.16\textwidth}
        \centering
        \includegraphics[width=\textwidth]{Figures2/CP1.PNG}
        \caption{$PC_1$}
        \label{fig:CP1}
    \end{subfigure} \hfill
    \begin{subfigure}{0.15\textwidth}
    \includegraphics[width=\textwidth, trim=0cm 0.4cm 0cm 0cm, clip]{Figures2/CP2.PNG}
    \caption{$PC_2$}
    \label{fig:CP2}
    \end{subfigure} \hfill
    \begin{subfigure}{0.16\textwidth}
    \includegraphics[width=\textwidth, trim=0cm 0cm 0cm 0cm, clip]{Figures2/CP3.PNG}
    \caption{$PC_3$}
    \label{fig:CP3}
    \end{subfigure} \hfill
    \begin{subfigure}{0.15\textwidth}
    \includegraphics[width=\textwidth, trim=0cm 0.5cm 0cm 0cm, clip]{Figures2/CP4.PNG}
    \caption{$PC_4$}
    \label{fig:CP4}
    \end{subfigure} \hfill
    \begin{subfigure}{0.16\textwidth}
    \includegraphics[width=\textwidth]{Figures2/CP5.PNG}
    \caption{$PC_5$}
    \label{fig:CP5}
    \end{subfigure} \hfill
    \begin{subfigure}{0.16\textwidth}
    \includegraphics[width=\textwidth, trim=0.5cm 0cm 0cm 0cm, clip]{Figures2/CP6.PNG}
    \caption{$PC_6$}
    \label{fig:CP6}
    \end{subfigure} \hfill
    \caption{The six possible prime caterpillars up to isometry and choice of leaves. Light green tiles are choices for leaves.}
    \label{fig:chenilles premières}
    
\end{figure}

There are two possibilities for extending a prime caterpillar $PC_k$ into a bigger fully leafed induced subtree. We can  extend it into a bigger caterpillar or into a tree that is not a caterpillar. In the second case, the fully leafed induced subtree grafted to $PC_k$ is called an \textbf{appendix} of $PC_k$. We have proved the following results.

\begin{lemma}[\cite{cloutier2026fullyleafedinducedsubtrees}]
    Let $\mathcal{C}=A\diamond PC_k$ be a fully leafed induced subtree obtained by grafting the appendix $A$ to the prime caterpillar $PC_k$. Then $A$ has at most 2 internal tiles.
\end{lemma}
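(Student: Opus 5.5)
The plan is to argue by contradiction with a local counting (exchange) argument, relying on the fact recalled above: an induced subtree of a P2-graph is fully leafed exactly when its number of degree-two tiles is minimal for its order, since every tile of an induced tree has degree at most 3.

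\textbf{Setup.} Suppose $\mathcal{C}=A\diamond_t PC_k$ is fully leafed and $A$ has at least 3 internal tiles. Because $A$ is an appendix, the graft tile $t$ is a leaf of $PC_k$ hanging off an internal tile $u$ of the spine of $PC_k$, and not at an end of the spine; otherwise $\mathcal{C}$ would again be a caterpillar. Inside $\mathcal{C}$, the tile $t$ becomes internal. If it had degree 2 it would be a wasted degree-two tile. So the first step is to enumerate, for each $k$ and each non-extremal leaf position $t$ of $PC_k$ (the light green choices in Figure \ref{fig:chenilles premières}), the neighbours of $t$ in the P2-graph that are not adjacent to any other tile of $PC_k$. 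This is a finite check using the forced tiles around stars and suns and the matching rules. The expected outcome is that such a $t$ has very little free room: typically it lies in a concave pocket of the spine, so at most one or two tiles can attach without creating a cycle or an induced adjacency.

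\textbf{Bounding the appendix.} Growing $A$ away from $t$, I would follow its internal tiles $a_1=t, a_2, a_3,\dots$ and check at each step, by the same local geometric enumeration, that the tiles $PC_k$ already occupies force $a_2$ or $a_3$ to have degree at most 2. Otherwise the only completions create a cycle, or an adjacency with a tile of $PC_k$, which would make the subtree non-induced. This would show that a third internal tile of $A$ forces at least one extra degree-two tile compared with an alternative tree of the same order.

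To conclude, I would exhibit that alternative: move the tiles of $A$ beyond its second internal tile, rearranged as a caterpillar extension, onto an end of the spine of $PC_k$ (or of a grafted prime caterpillar there). This gives an induced subtree of the same order with strictly fewer degree-two tiles, contradicting full leafedness. The local isomorphism property guarantees that the required configuration appears in the tiling, so the comparison tree exists.

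\textbf{Main obstacle.} The hardest step is the exhaustive geometric case analysis around each possible graft tile in all six prime caterpillars, together with their leaf choices. It also has to cover the fact that a degree-two tile might be unavoidable anyway because the tree has more than 8 internal tiles. My first attempt would be a computer-assisted enumeration of valid patches, generated by a few inflations of the sun and star, that contain $PC_k$ plus a candidate appendix. In each case I would verify that three internal appendix tiles cost strictly more degree-two tiles than extending along the spine.
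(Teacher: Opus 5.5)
There is a genuine gap. The way you reach a contradiction cannot work, and the step that carries the whole lemma is only announced as an expected outcome.

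Put $\delta(n)=n+2-2L_{P2}(n)$, the minimal number of degree-two tiles in an induced tree of order $n\ge 2$. Proposition~\ref{prop:formule close} gives $\delta(m+17)=\delta(m)+1$ for every $m\ge 2$. In $A\diamond_t PC_k$ the tile $t$ is a leaf of both pieces, so it has degree exactly $2$ in $\mathcal{C}$, and $\mathcal{C}$ has exactly one more degree-two tile than $A$. This has two consequences for your setup. First, $t$ is not a ``wasted'' tile. Second, $t$ is not an internal tile of $A$: your indexing $a_1=t$ shifts the bound by one and really targets ``at most one internal tile''. That is at odds with the two-internal-tile appendices allowed in Theorem~\ref{thm2}.

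Consequently, whenever a fully leafed $A$ is attached at $t$ with $A\cup PC_k$ induced, $\mathcal{C}$ is automatically fully leafed, whatever the number of internal tiles of $A$. There is no counting penalty for an appendix. So no tree of the same order with fewer degree-two tiles exists, and your relocation step can never yield the contradiction. It is also unsupported: the local isomorphism property only says that valid patches occur somewhere, not that a prescribed extension fits at a prescribed end of $PC_k$.

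Nor is forcing $a_2$ or $a_3$ to have degree $2$ a contradiction. A $7$-tile $A$ with three internal tiles, one of degree $2$, has $4=L_{P2}(7)$ leaves and is fully leafed. Likewise, the $20$-tile tree made of a path $t,a_2,g$ followed by a prime caterpillar grafted at $g$ passes the same count. Conversely, if the geometry did force $A$ to have more than $\delta(|A|)$ degree-two tiles, you would be done at once, because grafting requires $A$ to be fully leafed; no comparison tree is needed.

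The lemma is therefore a pure realizability statement, and that is exactly what your proposal leaves undone. For each $PC_k$, each choice of its leaves, and each leaf $t$ at which the union is not a caterpillar, you must show the following: no fully leafed $A$ with at least three internal tiles other than $t$ can be attached with $A\cup PC_k$ induced. Such an $A$ may contain degree-two tiles and be arbitrarily large. So the local analysis must show that induced extensions from $t$ are blocked within bounded distance, or otherwise handle these unbounded cases. Inspecting the degrees of the first two tiles is not enough.

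Finally, your reduction to $t$ hanging off an interior spine tile misses a case. If $t$ is attached to an end tile of the spine of $PC_k$, $\mathcal{C}$ still fails to be a caterpillar when $t$ is a non-extremal leaf of $A$. An example is $A$ a caterpillar with three internal tiles attached through its middle one, which would also be fully leafed by the count above. Such placements must be excluded too.
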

\begin{theorem}[\cite{cloutier2026fullyleafedinducedsubtrees}] \label{thm2}
    Every fully leafed induced subtree of a P2-graph belongs to one of the following structures:
    \begin{enumerate}
        \item[1.] A subcaterpillar of a prime caterpillar;
        \item[2.] A caterpillar obtained by grafting several prime caterpillars with at most one proper subcaterpillar of a prime caterpillar structure;
        \item[3.] A caterpillar obtained by grafting several prime caterpillars, to which an appendix of at most 2 internal tiles and 4 leaves is grafted.
    \end{enumerate}
\end{theorem}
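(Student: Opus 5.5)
We argue by induction on the number of internal tiles of a fully leafed induced subtree $T$, using the counting principle recalled above: an induced subtree of order $n$ is fully leafed exactly when it has the maximum possible number of degree~3 tiles, equivalently the minimum number of degree~2 tiles. If $T$ has at most 8 internal tiles, the result of \cite{porrier2023leaf} says $T$ is a subcaterpillar of a prime caterpillar, which is case~1.

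Suppose now that $T$ has at least 9 internal tiles. By \cite{porrier2023leaf}, $T$ has at least one internal tile of degree~2. We first show that $T$ splits at such a tile. Choose a degree~2 tile $t$ of $T$. It separates $T$ into two branches. Duplicating $t$ as a shared leaf writes $T=T_1\diamond_t T_2$. Each $T_i$ is again fully leafed: if some $T_i$ could be replaced by an induced subtree of the same order with more leaves, still attached at $t$, then $T$ would gain leaves. To make this replacement valid we need a local exchange argument in the P2-graph, using the local isomorphism property, which shows the replacement can be chosen induced and disjoint from the other branch. We then iterate: whenever a piece has more than 8 internal tiles it contains a degree~2 tile, so we split again. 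In the end $T$ is a grafting of pieces each having at most 8 internal tiles, all of degree~3 except possibly at the grafting tiles. Among pieces of a fixed size, a prime caterpillar maximises the number of degree~3 tiles. Hence in a fully leafed $T$ every piece except at most one is a full prime caterpillar $PC_k$; otherwise two deficient pieces could be merged to save a degree~2 tile.

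It remains to describe how these pieces are glued. Each prime caterpillar is grafted at one of its leaves. Either every grafting is made along the spine, which yields a caterpillar, or some piece is attached at a leaf hanging off a spine tile. In the second case that piece is an appendix in the sense above. By the preceding Lemma, an appendix has at most 2 internal tiles. Since P2 tiles in a tree have degree at most~3, a path of 2 internal tiles carries at most 4 leaves.

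We then show there is at most one irregular piece, meaning either a single proper subcaterpillar of a prime caterpillar or a single appendix, and never both. If two irregular pieces existed, the budget of degree~2 tiles would exceed its minimum. To prove this we compare with an explicit competitor of the same order built from prime caterpillars only, which has strictly more leaves. This gives cases~2 and~3.

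The main obstacle is this last step, together with the exchange argument showing that sub-pieces stay fully leafed. Both depend on the geometry of P2 patches: we must check which prime caterpillars can be grafted together, in which orientations, without creating extra adjacencies that would break inducedness. We expect this to need a finite case analysis over $PC_1,\dots,PC_6$ and their grafting tiles, possibly computer-assisted. We would first try to reduce it to the local configurations around the star and sun patches.
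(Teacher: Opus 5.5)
\paragraph{The counting step fails.} The claim you use twice does not follow from fully-leafedness. You say that ``every piece except at most one is a full prime caterpillar; otherwise two deficient pieces could be merged to save a degree~2 tile'', and that ``if two irregular pieces existed, the budget of degree~2 tiles would exceed its minimum''. Neither holds.

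Split $T$ at all of its degree~2 tiles into $m$ pieces with $k_1,\dots,k_m\le 8$ internal tiles. From $n=2n_3+n_2+2$ you get $n=17m+1-2D$, where $D=\sum_i(8-k_i)$, and $n_2=m-1\equiv n\pmod 2$. A competitor of the same order with more leaves has $n_2'\le n_2-2$. It therefore has at most $m-2$ pieces and order at most $17(m-2)+1$. Hence $T$ is fully leafed as soon as $D\le 16$. Minimality of $n_2$ bounds only the \emph{total} deficiency, not the number of deficient pieces. Merging two pieces is never an improvement at fixed order, because removing a single degree~2 tile breaks parity.

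A concrete counterexample: take a grafting $PC\diamond_t PC'$ as in Figure~\ref{fig:greffages premiers HBS}. In each factor, delete the two leaves of an extremal spine tile not adjacent to $t$. This leaves an induced subtree with $31$ tiles and $n_2=1$, which is the least odd value. It has $16=L_{P2}(31)$ leaves, and both of its pieces are proper subcaterpillars of prime caterpillars. The same truncation applied to a prime caterpillar carrying an appendix would give an appendix and a partial piece together. So ``at most one irregular piece, never both'' is false in the piece-by-piece sense you are proving, and no degree~2 budget argument can produce it.

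\paragraph{Where the content actually lies.} Counting permits essentially every shape the theorem excludes:
\begin{itemize}
\item a full prime caterpillar hanging off a middle spine tile ($D=0$);
\item an appendix with $7$ internal tiles ($n_2=1$);
\item two appendices ($D=12$).
\end{itemize}
What rules these out is the geometry of P2 patches. Prime caterpillars graft only in the two configurations of Figure~\ref{fig:greffages premiers HBS}, and the appendix Lemma, which the paper derives from the possible extensions of each $PC_k$, bounds what can branch off. That geometric analysis is the substance of the theorem. Your proposal defers it to an unspecified case analysis, and assigns the uniqueness statements to counting, which cannot deliver them.

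\paragraph{A smaller point.} Your exchange argument for the branches is not available: local isomorphism guarantees that a patch occurs somewhere, not next to a prescribed branch. It is also unnecessary. Splitting at all degree~2 tiles at once gives pieces with $n_2=0$. Such pieces are automatically fully leafed and have at most $8$ internal tiles. All intermediate graftings are then fully leafed by the criterion $D\le 16$.
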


Thus, in order to study fully leafed induced subtrees in P2 tilings, it is sufficient to study caterpillars obtained by grafting prime caterpillars, since any fully leafed induced subtree can be obtained from such a caterpillar by adding a small number of tiles. Caterpillars obtained by grafting prime caterpillars are precisely the fully leafed induced subtrees that have the property of saturation \cite{cloutier2026fullyleafedinducedsubtrees}. Let us explain what it means. The leaf function $L_{P2}(n)$ gives the number of leaves of a fully leafed induced subtree of order $n$. We give here an exact expression for $L_{P2}(n)$ that corrects the one given in \cite{porrier2023leaf}.
\begin{proposition}[\cite{cloutier2026fullyleafedinducedsubtrees}] \label{prop:formule close} 
$L_{P2}(n)=
\begin{cases}
0 & \text{if } 0 \leq n \leq 1, \\ 
\left\lfloor \dfrac{n}{2} \right\rfloor + 1 & \text{if } 2 \leq n \leq 18, \\ 
8 \left\lfloor \dfrac{n}{17} \right\rfloor + \left\lfloor \dfrac{n \bmod 17}{2} \right\rfloor + 1 + \mathbbm{1}(n \bmod 17 = 1) & \text{if } n \geq 19.
\end{cases}$
\end{proposition}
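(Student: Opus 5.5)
The plan is to reduce the leaf count to a count of degree-three tiles, obtain the lower bound by an explicit grafting construction, and obtain the upper bound from the structure Theorem~\ref{thm2}.

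\textbf{Reduction to degree-three tiles.} In an induced subtree of a P2-graph every tile has degree at most $3$. Write $n_i$ for the number of tiles of degree $i$ in a tree $T$ of order $n\ge 2$. The handshake identity $\sum_v(\deg v-2)=-2$ then reads $-n_1+n_3=-2$, so
\[ L(T)=n_3+2 . \]
Hence maximizing leaves is the same as maximizing degree-three tiles, or equivalently minimizing degree-two tiles, as recalled before Theorem~\ref{thm2}.

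\textbf{Small orders, $2\le n\le 18$.} I would invoke the verification of \cite{porrier2023leaf}: every fully leafed induced subtree with at most $8$ internal tiles is a subcaterpillar of a prime caterpillar. A prime caterpillar has $8$ internal tiles, all of degree $3$, hence $10$ leaves and $18$ tiles. Subcaterpillars obtained by removing tiles from an end lose one tile of degree three per two tiles removed, which gives $L_{P2}(n)=\lfloor n/2\rfloor+1$ on this range. I would check this against Figure~\ref{fig:chenilles premières}; at $n=18$ it gives $10$.

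\textbf{Lower bound for $n\ge 19$.} Write $n=17k+r$ with $0\le r\le 16$.
\begin{itemize}
\item Grafting $k$ prime caterpillars in a chain $PC\diamond PC\diamond\cdots$ costs $17$ new tiles per graft, since the shared leaf is counted once. The result has $17k+1$ tiles and $8k$ degree-three tiles, so $8k+2$ leaves.
\item For the remaining $r-1$ tiles (if $r\ge 2$), I graft at an end leaf a subcaterpillar of a prime caterpillar with $r$ tiles. This adds $\lfloor r/2\rfloor-1$ degree-three tiles, and the grafting tile becomes internal.
\item The total is $8k+\lfloor r/2\rfloor+1$ leaves. For $r=1$ we get $8k+2$, which is exactly where the indicator term in the formula comes from.
\item For $r=0$, I instead take $k-1$ primes together with a remainder of $18$ tiles, which gives $8(k-1)+9+1=8k+2$? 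Since this would exceed the formula, I must check it carefully. In fact $17(k-1)+1+17=17k$ with one full prime, giving $8k$ degree-three tiles and so $8k+2$ leaves. Reconciling this with the formula value $8k+1$ is something I would have to settle by a careful recount of which leaf is shared.
\end{itemize}
Realizing these chains requires that arbitrarily long graftings of prime caterpillars exist in a P2 tiling. I would take this from the extended version \cite{cloutier2026fullyleafedinducedsubtrees} via the local isomorphism property, since any valid finite patch appears in every P2 tiling.

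\textbf{Upper bound.} By Theorem~\ref{thm2}, an optimal tree is of type 1, 2 or 3.
\begin{itemize}
\item In type 2, each grafting point is a tile of degree $2$, and each maximal block between consecutive grafting points contains at most $8$ degree-three tiles in at most $17$ tiles (the prime bound).
\item The leftover block is a proper subcaterpillar, which contributes at most $\lfloor r'/2\rfloor$ degree-three tiles, as in the small range.
\item Summing these contributions and optimizing over the split $n=17k'+r'$ gives the stated expression.
\item For type 3, the preceding lemma bounds the appendix to at most $2$ internal tiles and $4$ leaves. A direct count shows it never does better than ending the chain with a subcaterpillar.
\end{itemize}

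\textbf{Main obstacle.} I expect the hard part to be the exact bookkeeping at the residues $r\in\{0,1\}$ and in the appendix case. Near multiples of $17$ the three possible decompositions differ by one leaf, and this is precisely where the earlier formula of \cite{porrier2023leaf} needed correcting. I would settle it by tabulating $n$ from $17k-1$ to $17k+2$ for each structure type and comparing with the formula.
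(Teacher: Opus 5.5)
Your lower bound at $r=0$ rests on an arithmetic slip. $k-1$ grafted prime caterpillars have $17(k-1)+1$ tiles, and grafting one more full prime adds $17$ tiles, giving $17k+1$ tiles, not $17k$. That configuration is just the $r=1$ case, so there is nothing to reconcile. For $n=17k$, graft a proper $17$-tile subcaterpillar (which has $7$ degree-three tiles) onto $k-1$ primes, or equivalently delete an end leaf of the $k$-prime chain. This gives $n_3=8k-1$ and $8k+1$ leaves, which is the formula's value. As written, though, your proposal claims a value the formula forbids and leaves that contradiction open.

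The more serious gap is the upper bound, which is asserted rather than proved, and whose one concrete estimate is wrong. In your split $n=17k'+r'$, the leftover subcaterpillar has exactly $r'$ tiles and contributes $\lfloor r'/2\rfloor-1$ degree-three tiles, not $\lfloor r'/2\rfloor$, because the shared leaf becomes a degree-two tile. Summing your bounds therefore overshoots the formula by one whenever $r'\ge 2$, and the type 3 case is only asserted. Leaning on Theorem~\ref{thm2} is also unnecessary, and it risks circularity, since bounding appendices naturally requires comparing against the optimal leaf count.

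The missing idea is a direct inequality valid for every induced subtree $T$ of order $n$. Split $T$ at each of its $n_2$ degree-two tiles, giving one copy of the tile to each incident edge. This yields $n_2+1$ induced subtrees whose internal tiles are exactly the degree-three tiles of $T$, all still of degree $3$. A tree of order $m$ whose internal tiles all have degree $3$ has $\lfloor m/2\rfloor+1$ leaves, the trivial maximum, so it is fully leafed. By the verification of \cite{porrier2023leaf}, it therefore has at most $8$ internal tiles. Thus $n_3\le 8(n_2+1)$, and combining this with $n=2n_3+n_2+2$ and $n_2\ge 0$ gives
\[ L_{P2}(n)\le 2+\min\Bigl\{\Bigl\lfloor\frac{n-2}{2}\Bigr\rfloor,\ \Bigl\lfloor\frac{8(n-1)}{17}\Bigr\rfloor\Bigr\}. \]
Now write $n=17k+r$. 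One checks that $\lfloor (8r-8)/17\rfloor$ equals $\lfloor r/2\rfloor-1$ for $2\le r\le 16$, equals $0$ for $r=1$, and equals $-1$ for $r=0$. The first term of the minimum is the smaller one for $n\le 18$ and the second for $n\ge 18$, so the bound is exactly the stated formula. This argument handles the residues $r\in\{0,1\}$ and every appendix shape at once, with no case analysis. Together with your corrected chain construction, which needs arbitrarily long chains of grafted primes as you note, it completes the proof.
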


Let $\overline{L_{P2}}$ be the lowest linear function that upper bounds $L_{P2}$. We say that a fully leafed induced subtree of order $n$ is \textbf{saturated}
if $L_{P2}(n)=\overline{L_{P2}}(n)$ \cite{blondin2018fully}.

\section{Prime caterpillar graftings}

We study more precisely here how prime caterpillars can be grafted together. The only two possibilities for the grafting of two prime caterpillars are illustrated in Figure \ref{fig:greffages premiers HBS} \cite{cloutier2026fullyleafedinducedsubtrees}.

\begin{figure}[H]
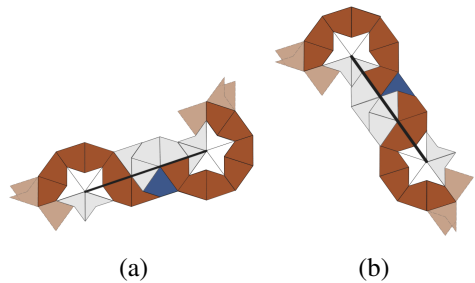

    \centering
    \begin{subfigure}{0.23\textwidth}
        \centering
        \includegraphics[width=\textwidth]{Figures3/primecaterpillarsgrafting1.PNG}
        \caption{}
    \end{subfigure} 
    \begin{subfigure}{0.18\textwidth}
        \centering
        \includegraphics[width=\textwidth]{Figures3/primecaterpillarsgrafting2.PNG}
        \caption{}
    \end{subfigure}
    \caption{The only two derived paths of graftings of two prime caterpillars. Light brown tiles are possibilities for degree three tiles. Blue tiles have degree $2$. We add a segment that joins the centers of the stars adjacent to the prime caterpillars.}
    \label{fig:greffages premiers HBS}
\end{figure}

For a given P2 tiling $T$, if we connect the centers of the stars in $T$ with edges as in Figure \ref{fig:greffages premiers HBS}, we obtain a new graph that we call a \textbf{star-graph}. The tilings with vertices and edges of the star-graphs are precisely the HBS tilings \cite{porrier2023hbs}. We illustrate a region of an HBS tiling in Figure \ref{fig:HBS}. A vertex of a star-graph is colored red, green or blue if it is the vertex in the center of a P2 star adjacent to 0, 1 or 2 suns, respectively. Then, the problem of finding fully leafed caterpillars in P2-graphs becomes the problem of finding paths in star-graphs that determine fully leafed caterpillars.

\begin{figure}[H]
    \centering
    \includegraphics[width=0.6\textwidth]{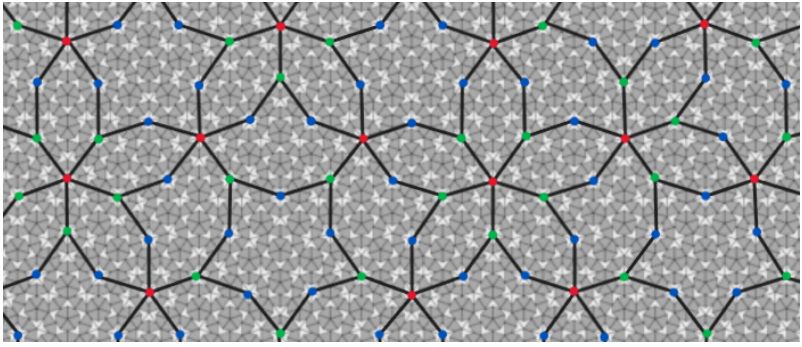}
    \caption{A region of an HBS tiling superimposed on a P2 tiling. Vertices at the center of a P2 star are color-coded red, green and blue depending on whether the star is adjacent to 0, 1, or 2 suns, respectively.}
    \label{fig:HBS}
\end{figure} 

Let $\mathcal{C}$ be a fully leafed caterpillar with star-graph path $p_\mathcal{C}$ that contains at least two prime caterpillars $C_1$ and $C_2$. Let $p'_\mathcal{C}$ be the extension of $p_\mathcal{C}$ obtained by extending the two end segments of $p_\mathcal{C}$ into two Euclidean half lines, so $p'_\mathcal{C}$ splits the Euclidean plane into two disjoint regions. We say that $C_1$ and $C_2$ \textbf{lie on the same side} (resp. \textbf{lie on opposite sides}) of $p_\mathcal{C}$ if the two internal tiles of $C_1'''$ and $C_2'''$ all lie (resp. do not lie) on a same side of $p'_\mathcal{C}$. The two internal tiles of $C_1'''$ and $C_2'''$ are the tiles that are at the center of $C_1$ and $C_2$ respectively. An example that illustrates this definition can be found in \cite{cloutier2026fullyleafedinducedsubtrees}.

\begin{proposition}[\cite{cloutier2026fullyleafedinducedsubtrees}] \label{prop: alternance}
If $\mathcal{C}$ is a fully leafed caterpillar, for $C_j$ and $C_{j+1}$ two grafted prime caterpillars in $\mathcal{C}$, $C_j$ and $C_{j+1}$ are on opposite sides of the star-graph path of $\mathcal{C}$.
\end{proposition}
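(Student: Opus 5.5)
The plan is to reduce the statement to a local configuration around a single grafting tile, and then settle that configuration by a finite case analysis in the P2 tiling.

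\textbf{Step 1: reduce to a local check.} Let $C_j$ and $C_{j+1}$ be two consecutive prime caterpillars of $\mathcal{C}$, grafted at a tile $t$. By the discussion preceding Figure~\ref{fig:greffages premiers HBS}, their union has one of the only two possible derived paths shown there. So the portion of the star-graph path $p_\mathcal{C}$ determined by $C_j \diamond_t C_{j+1}$ consists of the segment joining the centers of the stars adjacent to $C_j$ and $C_{j+1}$. The side of $C_j$ relative to $p'_\mathcal{C}$ is decided near this segment, by the position of the two central tiles of $C_j'''$. The same holds for $C_{j+1}$. Hence it suffices to show, for each of the two configurations (a) and (b), that the central tiles of $C_j'''$ and of $C_{j+1}'''$ lie on opposite sides of the line through the star-graph segment.

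\textbf{Step 2: pass from the full path to the local segment.} We need that replacing $p'_\mathcal{C}$ by the local line does not change which side a central tile is on. This uses the fact that the central tiles of a prime caterpillar are adjacent to the star whose center is a vertex of $p_\mathcal{C}$, so they lie next to the path. I would argue this with the geometry of HBS tilings. Each star-graph edge has bounded length. Consecutive edges of $p_\mathcal{C}$ meet at angles in a finite set of multiples of $\pi/5$. Together these prevent $p'_\mathcal{C}$ from folding back between the two central tiles and a neighbouring segment.

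\textbf{Step 3: the local configurations.} I would carry out the finite check as follows.
\begin{itemize}
\item Apply the local isomorphism property, so that it is enough to examine the finitely many valid patches realizing configurations (a) and (b).
\item Use the description of the six prime caterpillars $PC_1,\dots,PC_6$ from Figure~\ref{fig:chenilles premières}. In each, the derived chain $C'''$ is centered on tiles lying on one definite side of the star it wraps.
\item Use the fact that the grafting tile $t$ is a leaf of both caterpillars. This forces $C_{j+1}$ to continue the spine by turning around the next star in the opposite rotational sense.
\end{itemize}
The intended intuition is a zigzag: a prime caterpillar made only of degree 3 internal tiles must wrap around a star on one side. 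If the next prime caterpillar wrapped on the same side, then near $t$ either a cycle would be created or a tile would acquire degree 4. The latter contradicts the degree bound for P2 trees. As a final consistency check, the same-side alternative would also yield an extra degree 2 tile, contradicting full leafedness via Proposition~\ref{prop:formule close}.

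\textbf{Main obstacle.} I expect Step 2 to be the hardest part: making rigorous that sides relative to the infinite extension $p'_\mathcal{C}$ agree with local sides. The path may turn, and the extended half lines may cross regions near the caterpillars. My first attempt would be to prove that fully leafed star-graph paths are locally convex-free, i.e.\ that consecutive turns alternate, which follows from the same case analysis. If that fails, I would instead define the sides using a tubular neighbourhood of the path and show it agrees with the paper's definition.
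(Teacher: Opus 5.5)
Your overall strategy, reading the statement off the two graftings of Figure~\ref{fig:greffages premiers HBS}, is the natural one. However, the bridge you build between the global notion of side and that local check is wrong in two places. Write $p_\mathcal{C}=\dots v_{j-1}v_jv_{j+1}v_{j+2}\dots$, so that $\alpha_j$ sits at $v_j$ and $\alpha_{j+1}$ at $v_{j+1}$.

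\textbf{Step 1 uses the wrong local criterion.} By the definition of the angle, the central tiles of $C_j'''$ lie next to $v_j$ in the sector of measure $\alpha_j$ bounded by the rays from $v_j$ through $v_{j-1}$ and $v_{j+1}$. Nothing ties them to the line through $v_jv_{j+1}$. For $PC_2$, $PC_5$ and $PC_4$ this sector is reflex ($\frac{6\pi}{5}$ or $\frac{8\pi}{5}$), so it contains points on both sides of that line. For such points the side of the line does not determine the side of $p'_\mathcal{C}$, so the reduction in Step~1 is unjustified.

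\textbf{The lemma proposed for Step 2 is false.} You propose that consecutive turns of the path alternate. But if sides alternate (which is what you are proving), the path turns towards $C_j$ at $v_j$ when $\alpha_j=\frac{4\pi}{5}$ and away from it when $\alpha_j$ is reflex. So two consecutive turns go the same way whenever a $\frac{4\pi}{5}$ angle is adjacent to a reflex one. Edge lengths and angle values are not what controls sides.

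\textbf{The missing idea: the global-to-local passage is topological.} The paper's definition presupposes that $p'_\mathcal{C}$ is a simple curve splitting the plane into two regions. By the Jordan curve theorem these regions are the left and the right of the oriented curve, consistently along its whole length. Since $p_\mathcal{C}$ is a path along edges of the planar HBS tiling and visits $v_j$ only once, it does not enter the faces around $v_j$ containing the central tiles of $C_j'''$. Hence $C_j$ is on the left of $p'_\mathcal{C}$ exactly when $\alpha_j$ is measured on the left at $v_j$. Your tubular-neighbourhood fallback is precisely this argument. It is also how one would make the definition robust against the end half-lines passing near the path.

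The proposition thus becomes a purely local statement: $\alpha_j$ and $\alpha_{j+1}$ are measured on opposite sides of the oriented subpath $v_{j-1}v_jv_{j+1}v_{j+2}$. That is what must be checked on the two configurations of Figure~\ref{fig:greffages premiers HBS}. No turning or convexity property is involved, so this step is not the main obstacle. The substance lies in the classification of graftings, which you take as given.

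\textbf{On Step 3.} The heuristics there (a cycle or a degree-$4$ tile, an extra degree-$2$ tile) are unproved, and ``turning in the opposite rotational sense'' merely restates the conclusion. Given the classification they are unnecessary; without it they cannot replace the case analysis behind it.
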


From Figure \ref{fig:greffages premiers HBS}, we observe that every prime caterpillar $C$ determines a path $p_C$ of two connected edges in a star-graph. We define the \textbf{angle} $\alpha$ of a prime caterpillar $C$ as the Euclidean angle determined by $p_C$ and measured on the side of $p_C$ where the two internal tiles of $C'''$ lie. According to this definition, the angle measured in radians of the prime caterpillars $PC_1$, $PC_3$ and $PC_6$ is $\frac{4\pi}{5}$, the angle of $PC_2$ and $PC_5$ is $\frac{6\pi}{5}$ and the angle of $PC_4$ is $\frac{8\pi}{5}$ \cite{cloutier2026fullyleafedinducedsubtrees}.

We claim that if the star-graph path of a fully leafed saturated caterpillar $\mathcal{C}$ contains at least one pair of edges forming an angle of $\frac{8\pi}{5}$, then this star-graph path uniquely determines the saturated caterpillar $\mathcal{C}$. Indeed, an angle of measure $\frac{8\pi}{5}$ has the conjugate angle $\frac{2\pi}{5}$. Since there exists no prime caterpillar with an angle of $\frac{2\pi}{5}$, an angle measuring $\frac{8\pi}{5}$ on one side and $\frac{2\pi}{5}$ on the other side can correspond only to a prime caterpillar forming an angle of $\frac{8\pi}{5}$. The only possible prime caterpillar for such an angle is $PC_4$. By Proposition~\ref{prop: alternance}, the rest of the star-graph path of $\mathcal{C}$ completely determines the saturated caterpillar $\mathcal{C}$. Hence, we can use a star-graph path containing at least one angle that measures $\frac{8\pi}{5}$ on one side to represent a fully leafed saturated caterpillar.

Just as we did with prime caterpillars, we find paths in the star-graphs that are primitive blocks to construct bigger fully leafed caterpillars. We call the caterpillars corresponding to these paths \textbf{sea caterpillars}, and we present them in Figure \ref{fig:chenilles marines}.

\section{Bi-infinite fully leafed caterpillars}

Since we now know how to graft caterpillars to construct large fully leafed caterpillars, the natural question that comes to mind is to ask if there are such caterpillars that can be extended infinitely. An infinite induced caterpillar $\mathcal{C}$ is said to be fully leafed if there exists a sequence $(\mathcal{C}_n)_{n\in \mathbb{N}}$ of finite fully leafed induced caterpillars such that for every $n\in \mathbb{N}$, $\mathcal{C}_n\subseteq \mathcal{C}_{n+1}$ and $\mathcal{C}=\lim_{n\to \infty} \mathcal{C}_n$. Since a caterpillar can be extended in two directions, we are interested in finding bi-infinite fully leafed caterpillars. A first one was found in \cite{porrier2023leaf}, and we formalize its construction in \cite{cloutier2026fullyleafedinducedsubtrees}. This bi-infinite caterpillar does not contain any cape 4. Conjecture 2 of \cite{porrier2023leaf} suggests that this bi-infinite caterpillar is the unique bi-infinite fully leafed caterpillar of P2-graphs. It is not a trivial problem, because any finite fully leafed induced caterpillar cannot necessarily be extended into a bi-infinite fully leafed caterpillar. To approach this conjecture, we focus on results that allow us to identify caterpillars that cannot be extended to bi-infinite fully leafed caterpillars.

\begin{figure}[H]
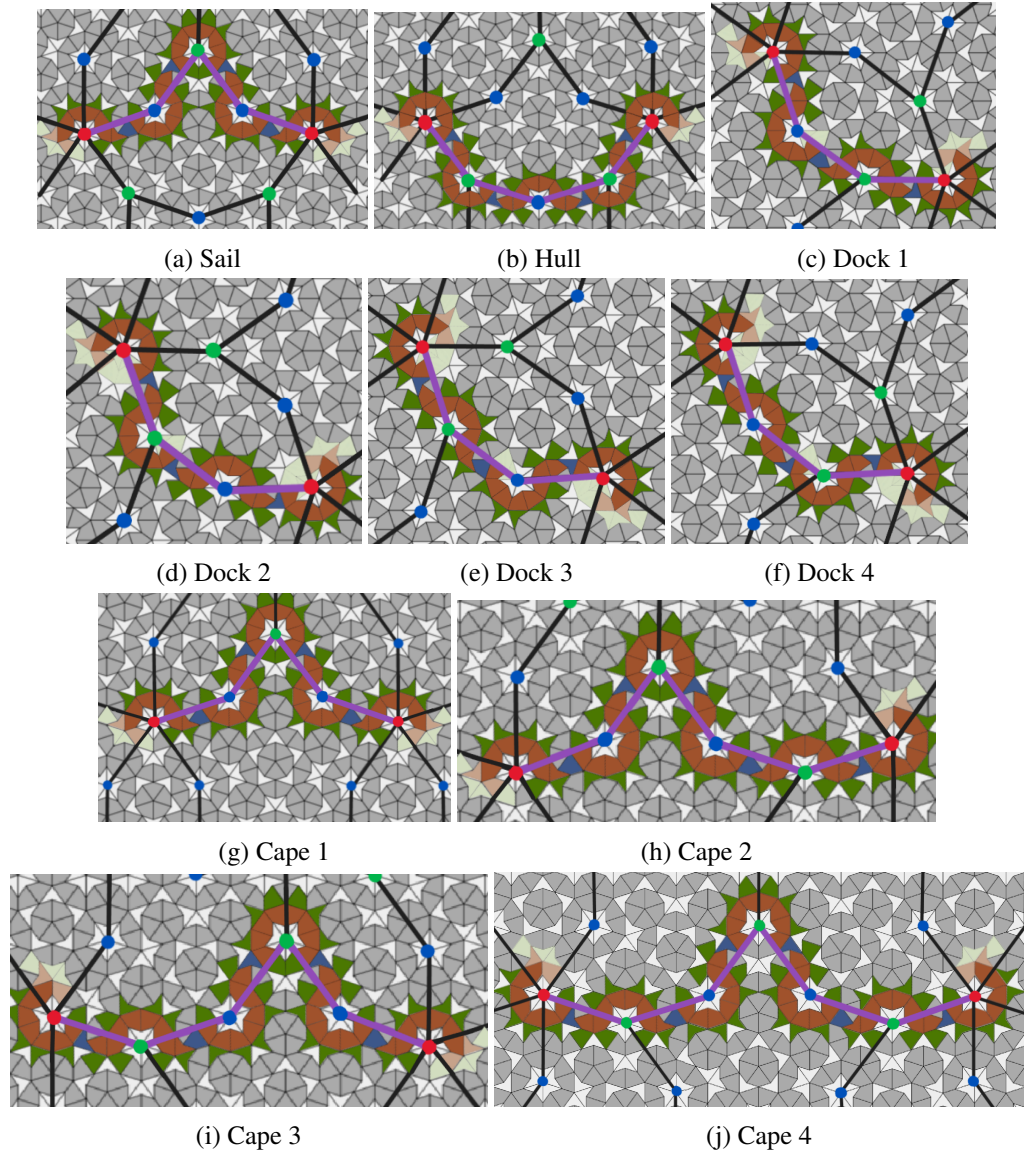

    \centering
    \begin{subfigure}{0.29\textwidth}
        \centering
        \includegraphics[width=\textwidth, trim=0cm 0.5cm 0cm 0.3cm, clip]{Figures2/Sail.PNG}
        \caption{Sail}
    \end{subfigure}
    \begin{subfigure}{0.29\textwidth}
        \centering
        \includegraphics[width=\textwidth, trim=0cm 0cm 0cm 0cm, clip]{Figures1/Hull.PNG}
        \caption{Hull}
    \end{subfigure} 
    \begin{subfigure}{0.25\textwidth}
        \centering
        \includegraphics[width=\textwidth, trim=0cm 0.4cm 0cm 0.1cm, clip]{Figures1/Dock1.PNG}
        \caption{Dock 1}
    \end{subfigure}  
    \begin{subfigure}{0.26\textwidth}
        \centering
        \includegraphics[width=\textwidth, trim=0cm 0.9cm 0cm 0.05cm, clip]{Figures1/Dock2.PNG}
        \caption{Dock 2}
    \end{subfigure}
    \begin{subfigure}{0.26\textwidth}
        \centering
        \includegraphics[width=\textwidth, trim=0cm 0.2cm 0cm 0cm, clip]{Figures1/Dock3.PNG}
        \caption{Dock 3}
    \end{subfigure}
    \begin{subfigure}{0.26\textwidth}
        \centering
        \includegraphics[width=\textwidth]{Figures1/Dock4.PNG}
        \caption{Dock 4}
    \end{subfigure} 
    \begin{subfigure}{0.31\textwidth}
        \centering
        \includegraphics[width=\textwidth, trim=0cm 0cm 0cm 0cm, clip]{Figures1/Cape1.PNG}
        \caption{Cape 1}
    \end{subfigure} 
    \begin{subfigure}{0.42\textwidth}
        \centering
        \includegraphics[width=\textwidth, trim=0cm 0.7cm 0cm 0cm, clip]{Figures1/Cape2.PNG}
        \caption{Cape 2}
    \end{subfigure} 
    \begin{subfigure}{0.42\textwidth}
        \centering
        \includegraphics[width=\textwidth, trim=0cm 0.4cm 0cm 0cm, clip]{Figures1/Cape3.PNG}
        \caption{Cape 3}
    \end{subfigure} 
    \begin{subfigure}{0.465\textwidth}
        \centering
        \includegraphics[width=\textwidth, trim=0cm 0cm 0cm 0cm, clip]{Figures1/Cape4.PNG}
        \caption{Cape 4}
    \end{subfigure}
    \caption{Sea caterpillars (up to rotation). We keep the color code of the previous figures.}
    \label{fig:chenilles marines}
   
\end{figure}

\begin{lemma}[\cite{cloutier2026fullyleafedinducedsubtrees}]
    There exists no bi-infinite fully leafed caterpillar that contains two consecutive $\frac{4\pi}{5}$ angles.
\end{lemma}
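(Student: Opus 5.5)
We argue by contradiction. Suppose $\mathcal{C}$ is a bi-infinite fully leafed caterpillar whose star-graph path $p_\mathcal{C}$ contains two consecutive angles of $\frac{4\pi}{5}$. These angles come from two grafted prime caterpillars $C_j$ and $C_{j+1}$, each of type $PC_1$, $PC_3$ or $PC_6$. By Proposition~\ref{prop: alternance}, $C_j$ and $C_{j+1}$ lie on opposite sides of $p_\mathcal{C}$. So the three consecutive edges of $p_\mathcal{C}$ carrying these angles form a zigzag: an angle of $\frac{4\pi}{5}$ measured on one side, then an angle of $\frac{4\pi}{5}$ measured on the other side.

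The first step is to pin down this local configuration in the HBS tiling. Each vertex of a star-graph is the centre of a P2 star, coloured red, green or blue. We enumerate the possible colourings of the four consecutive vertices of such a zigzag that are compatible with the HBS structure of \cite{porrier2023hbs}. Using the pictures of the two grafting types in Figure~\ref{fig:greffages premiers HBS}, together with the explicit shapes of $PC_1$, $PC_3$ and $PC_6$, we then determine the surrounding P2 patch that is forced by the matching rules. We expect this to leave only a small finite list of patches, up to isometry.

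The second step is to look at the continuation on both ends of the zigzag. The caterpillar is bi-infinite and fully leafed, and every finite $\mathcal{C}_n$ containing the zigzag is fully leafed. By Theorem~\ref{thm2} and saturation, the next prime caterpillars $C_{j-1}$ and $C_{j+2}$ must exist. They must be grafted through one of the two allowed derived paths, and they must again alternate sides. For each forced patch we try all admissible star-graph edges leaving the end vertices. The claim is that on at least one end, every continuation fails for one of three reasons:
\begin{itemize}
\item it creates a cycle or a non-induced adjacency with tiles of $C_j$ or $C_{j+1}$;
\item it forces a degree-two tile beyond what saturation allows;
\item it violates the matching rules, i.e.\ the extended patch is not valid.
\end{itemize}
Failure at one end bounds the caterpillar in that direction, which contradicts bi-infiniteness. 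Failure can also be allowed after a bounded number of further prime caterpillars, since the argument then repeats.

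I expect the main obstacle to be this second step. Finite fully leafed caterpillars with two consecutive $\frac{4\pi}{5}$ angles presumably do exist, for example the sea caterpillars built from $PC_1$, $PC_3$ and $PC_6$. So the contradiction cannot be purely local. It probably has to use the geometry of the star-graph path: repeated turns of $\frac{4\pi}{5}$ on alternating sides force the path to bend, and one must show it eventually collides with itself or reaches a red vertex configuration that admits no further grafting. My first approach would be to use inflation, or equivalently the hierarchical structure of HBS tilings, to bound how far such a configuration can be extended before an obstruction appears. Failing that, I would fall back on a computer-assisted exhaustive search over the finitely many patches within a fixed radius, as in \cite{porrier2023leaf}.
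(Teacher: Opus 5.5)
The part you do carry out is sound. The two angles come from grafted copies of $PC_1$, $PC_3$ or $PC_6$, Proposition~\ref{prop: alternance} makes the three edges a zigzag, and a contradiction follows once one end of the star-graph path provably cannot be continued. The paper only cites the extended version for this lemma, so there is no written proof here to compare against. Judged on its own, the proposal has a gap: all the content of the lemma is in your second step. There you write ``the claim is that on at least one end, every continuation fails'' and then list ways a continuation \emph{could} fail. You never say which grafting at $v_0$ or $v_3$ is excluded, by which of your three mechanisms, or after how many prime caterpillars. As written, this restates the lemma and describes how one might search for a proof.

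The real difficulty is hidden in ``failure can also be allowed after a bounded number of further prime caterpillars, since the argument then repeats.'' Nothing you have written guarantees that the case tree is finite.
\begin{itemize}
\item If you mean that the same zigzag argument is reapplied further along, it only applies when $C_{j+2}$ again has angle $\frac{4\pi}{5}$. In that case the repetition may never stop.
\item If you mean that the branching search is simply continued, then its termination is exactly what has to be proved.
\end{itemize}
Your own remark that the obstruction cannot be purely local is why a fixed-radius computer search does not close the gap: it only excludes ends that die inside the chosen radius.

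Two things are missing.
\begin{itemize}
\item An explicit forcing statement. From the HBS neighbourhood of $v_0v_1v_2v_3$, list the admissible angles and grafting types for $C_{j-1}$ and $C_{j+2}$. Then show that on one side every branch reaches, within a bound you actually establish, one of two dead ends. Either neither grafting of Figure~\ref{fig:greffages premiers HBS} is available there, or the new prime caterpillar would touch $C_j$ or $C_{j+1}$.
\item A separate argument for branches that keep producing $\frac{4\pi}{5}$ angles. A clean global one exists. With alternating sides, such a run is a zigzag whose even-indexed vertices satisfy $v_{2k}=v_0+k(v_2-v_0)$. Star centres are vertices of the P2 tiling, so they lie in a cut-and-project set with bounded window. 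The star map is additive and injective on the underlying module, so $v_0^\ast+k(v_2-v_0)^\ast$ is unbounded. Hence such a set contains no infinite arithmetic progression, and this branch is ruled out without any inflation argument.
\end{itemize}
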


\begin{proposition}[\cite{cloutier2026fullyleafedinducedsubtrees}]
    The prime caterpillar $PC_1$ cannot belong to any bi-infinite fully leafed caterpillar.
\end{proposition}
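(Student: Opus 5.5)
We argue by contradiction: suppose $\mathcal{C}$ is a bi-infinite fully leafed caterpillar containing a copy $C_j$ of $PC_1$. Since $\mathcal{C}$ is infinite, it is the limit of finite fully leafed caterpillars $\mathcal{C}_n$. For $n$ large, $\mathcal{C}_n$ contains $C_j$ together with many prime caterpillars on both sides of it. By Theorem~\ref{thm2}, and because an appendix cannot persist along a nested sequence growing in both directions, the part of $\mathcal{C}$ around $C_j$ is a grafting $\cdots\diamond C_{j-1}\diamond C_j\diamond C_{j+1}\diamond\cdots$ of prime caterpillars. Its star-graph path $p_\mathcal{C}$ is therefore a bi-infinite path in the HBS star-graph.

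The core of the argument is local, around the angle of $C_j$. Prime caterpillars have angles $\frac{4\pi}{5}$ ($PC_1,PC_3,PC_6$), $\frac{6\pi}{5}$ ($PC_2,PC_5$) or $\frac{8\pi}{5}$ ($PC_4$). By Proposition~\ref{prop: alternance}, $C_{j-1}$ and $C_{j+1}$ lie on the side of $p_\mathcal{C}$ opposite to $C_j$, and consecutive prime caterpillars share an edge of the star-graph path.

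\begin{enumerate}
\item From the picture of $PC_1$ in Figure~\ref{fig:chenilles premières}, I would read off the colors (red, green or blue) of the three star centres of $p_{C_j}$. That is, I would determine which stars adjacent to $C_j$ are adjacent to suns, and so fix the local HBS configuration around $C_j$.
\item Using the two admissible grafting configurations of Figure~\ref{fig:greffages premiers HBS}, I would list the possible neighbours $C_{j+1}$ (and symmetrically $C_{j-1}$) with angle measured on the opposite side. By the previous lemma, two consecutive $\frac{4\pi}{5}$ angles are excluded, which rules out the neighbours $PC_1$, $PC_3$ and $PC_6$ whenever they would produce such a pair.
\item The remaining candidates, built from $PC_2$, $PC_4$ and $PC_5$, would be matched against the sea caterpillars of Figure~\ref{fig:chenilles marines}. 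I expect each admissible extension of $PC_1$ to be forced into a configuration, such as a dock or cape, that either terminates on one side or returns, after a few steps, to two consecutive $\frac{4\pi}{5}$ angles.
\end{enumerate}

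The main obstacle is this last step. The forced extensions must be shown to end in a contradiction on at least one side, and this needs the P2 matching rules, through the validity of patches and the fact that no HBS vertex configuration allows the required continuation. Here I would first try a finite case analysis: enumerate valid P2 patches around $PC_1$ up to a bounded radius, using inflation and the local isomorphism property to keep the enumeration finite. I would then check that every star-graph continuation of length about three on each side either fails validity or creates two consecutive $\frac{4\pi}{5}$ angles. Together with the previous lemma, this gives the contradiction.
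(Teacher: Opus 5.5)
There is a genuine gap: the proposal is a plan, and the step that carries all of the content appears only as an expectation. Your one concrete deduction is in step 2. There the lemma on consecutive $\frac{4\pi}{5}$ angles excludes $PC_1$, $PC_3$ and $PC_6$ as neighbours $C_{j\pm1}$. It does so outright, not ``whenever'': each angle is measured on its own caterpillar's side, and $C_j$ already has angle $\frac{4\pi}{5}$, so any such neighbour creates the forbidden pair.

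That deduction uses nothing about $C_j$ except its angle, so it applies verbatim to $PC_3$ and $PC_6$. By itself it gives no contradiction, only that the neighbours of $C_j$ have angle $\frac{6\pi}{5}$ or $\frac{8\pi}{5}$. Since the proposition singles out $PC_1$, whatever excludes it must come from its finer geometry. Your steps 1--3 never identify that feature.

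Concretely, you would have to do the following:
\begin{itemize}
\item determine the colours of the three star centres of $p_{PC_1}$;
\item determine which star-graph edges leave its two end vertices, and on which side;
\item show, using the two grafting patterns of Figure~\ref{fig:greffages premiers HBS}, the side constraint of Proposition~\ref{prop: alternance} and validity of the surrounding patch, that at one end of $PC_1$ every realizable grafting leads to an already forbidden configuration (a second $\frac{4\pi}{5}$ angle, or no valid patch at all).
\end{itemize}
That is the missing idea, and your step 3 only says you expect something of this kind to happen.

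The fallback of a bounded enumeration of valid patches, with continuations of length ``about three'', does not close the gap as written. It would be a proof only if you carried it out and every branch died within that radius, and you give no reason why the obstruction should be local. Suppose some continuation through a $\frac{6\pi}{5}$ or $\frac{8\pi}{5}$ neighbour survives a few steps. Then ``returns after a few steps to two consecutive $\frac{4\pi}{5}$ angles'' becomes a claim about the whole infinite continuation, which a finite search cannot certify. Until the local configuration at the ends of $p_{PC_1}$ is pinned down and the graftings there are actually excluded, no contradiction has been reached.
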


\begin{proposition}[\cite{cloutier2026fullyleafedinducedsubtrees}]
    Neither a cape 2 nor a cape 3 sea caterpillar can belong to a bi-infinite fully leafed caterpillar.
\end{proposition}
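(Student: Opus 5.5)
The plan is to reduce the statement to the two earlier obstructions: the lemma forbidding two consecutive $\frac{4\pi}{5}$ angles, and the proposition excluding $PC_1$. The first step is to read off, from Figure~\ref{fig:chenilles marines}, the sequence of prime caterpillars making up a cape 2 and a cape 3 sea caterpillar. We also record the angle each one determines along the star-graph path, using the values $\frac{4\pi}{5}$ for $PC_1,PC_3,PC_6$, $\frac{6\pi}{5}$ for $PC_2,PC_5$ and $\frac{8\pi}{5}$ for $PC_4$. Proposition~\ref{prop: alternance} tells us that consecutive prime caterpillars lie on alternating sides of the path. So an angle measured on one side has its conjugate on the other side, and this fixes which prime caterpillar can occupy each bend.

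The second step is to suppose that a bi-infinite fully leafed caterpillar $\mathcal{C}$ contains a cape 2 (resp. cape 3). The cape then forces certain bends of the star-graph path of $\mathcal{C}$. Since a cape is an end-type block (it turns the path, unlike a dock or a hull), $\mathcal{C}$ must continue past it on both sides. I would do a case analysis on the next star-graph vertex beyond each end of the cape, using the colour of the star-graph vertices (red, green or blue, i.e. adjacency to suns) and the local geometry of the HBS tiling. These constrain the edges that can leave each vertex. In each branch, the alternation of Proposition~\ref{prop: alternance} together with the admissible angles should leave three possibilities. The extension may need a $\frac{4\pi}{5}$ bend immediately after a $\frac{4\pi}{5}$ bend already in the cape, which the lemma forbids. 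It may force the prime caterpillar at that bend to be $PC_1$, which the earlier proposition excludes. Or it may force a degree-two tile where a grafting is required, contradicting saturation via Proposition~\ref{prop:formule close}.

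The third step handles any remaining branch that survives these local tests. Here I would use the local isomorphism property and inflation to show that the only valid P2 patch extending the cape in that direction forces a sun or star configuration incompatible with a further prime-caterpillar grafting. That makes the extension at best a finite fully leafed caterpillar, never part of a nested sequence $(\mathcal{C}_n)$ of fully leafed caterpillars.

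The main obstacle is this exhaustive geometric case analysis around the two ends of each cape. It depends on exactly which star-graph edges are available at the boundary vertices, and that information is encoded only in the HBS tiling. I would first try to show that every continuation needs a $\frac{4\pi}{5}$ bend adjacent to an existing $\frac{4\pi}{5}$ bend, and fall back on the $PC_1$ exclusion for the cases where that fails.
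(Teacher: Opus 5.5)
You reduce to the two preceding obstructions (no two consecutive $\frac{4\pi}{5}$ angles, no $PC_1$), which is a sensible instinct. The paper gives no proof here to compare against; it defers it to the extended version. But as written, your proposal is a template for a case analysis rather than an argument. You never say which prime caterpillars make up a cape 2 or a cape 3, and hence which angles lie on which sides. You never say which star-graph edges leave their end vertices. You never exhibit a single branch that ends in a contradiction; the decisive steps are phrased as ``should leave three possibilities'' and ``I would first try \dots{} and fall back on \dots''.

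That missing content is the whole proof, as a sanity check shows. Nothing you write uses a property that separates capes 2 and 3 from cape 4, and your description of a cape as an ``end-type block'' is a guess about the figure. Yet by the next theorem a cape 4 does lie in a bi-infinite fully leafed caterpillar. So you cannot simply assert the outcome of the case analysis. You must identify a concrete feature of capes 2 and 3 and verify it against the actual tiles around the cape. An example of such a feature: at one end, every admissible continuation of the star-graph path forces a second consecutive $\frac{4\pi}{5}$ bend or a $PC_1$.

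Several of the tools you invoke also need repair.
\begin{itemize}
\item Angle plus side does not fix which prime caterpillar sits at a bend. It only fixes the class $\{PC_1,PC_3,PC_6\}$, $\{PC_2,PC_5\}$ or $\{PC_4\}$. So the $PC_1$ branch cannot be run on angle data; it needs the local tile configuration (vertex colours, adjacent suns) to separate $PC_1$ from $PC_3$ and $PC_6$.
\item The contradiction ``via saturation'' is misattributed, since the $\mathcal{C}_n$ are only assumed fully leafed. The right tool is minimality of degree-two tiles. A tree of maximum degree $3$ has $d_1=d_3+2$, so the formula for $L_{P2}$ gives exactly $k-1$ degree-two tiles in a fully leafed tree of order $17k+1$. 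Combined with Theorem~\ref{thm2}, every degree-two tile deep inside a bi-infinite fully leafed caterpillar is a graft point.
\item Your third step is exactly where an impossibility has to be proved, and it has no mechanism. The local isomorphism property only makes the question tiling-independent; it rules nothing out.
\end{itemize}
What is needed is an explicit forced-tile (matching-rule) computation. It must show that every continuation of the cape, in at least one direction, reaches within boundedly many prime caterpillars one of: an invalid patch, a loss of inducedness, an extra degree-two tile, or one of the two forbidden configurations.
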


Other similar results, in particular about graftings of sea caterpillars, can be found in \cite{cloutier2026fullyleafedinducedsubtrees}. These results allow us to construct a new bi-infinite fully leafed caterpillar.

\begin{theorem}[\cite{cloutier2026fullyleafedinducedsubtrees}]
    There exists a bi-infinite fully leafed caterpillar that contains a cape 4.
\end{theorem}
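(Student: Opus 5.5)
The plan is to build the caterpillar explicitly, as the limit of an increasing sequence of finite saturated caterpillars, and to use inflation to obtain self-similarity. The whole problem is moved to the star-graph (HBS) level. By the discussion after Proposition~\ref{prop: alternance}, a star-graph path in which prime caterpillars alternate sides, and which contains a $\frac{8\pi}{5}$ angle, determines a unique saturated caterpillar. So it is enough to produce a bi-infinite path $p$ in the HBS tiling with three properties: every pair of consecutive edges forms an angle realisable by a prime caterpillar on the prescribed alternating side; no two consecutive $\frac{4\pi}{5}$ angles appear; and the path passes through a cape~4 configuration.

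\textbf{Step 1: the seed.} I would take a finite fully leafed caterpillar $\mathcal{C}_0$ containing a cape~4. I would extend it on both sides by sea caterpillars that the previous propositions allow. This rules out $PC_1$, cape~2 and cape~3, and the lemma rules out consecutive $\frac{4\pi}{5}$ angles. So I would graft only sails, hulls, docks and capes~1 and~4. I would check in the figure-level case analysis that the grafting rules of Figure~\ref{fig:greffages premiers HBS} permit such an extension at both ends, with the resulting patch being valid.

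\textbf{Step 2: self-similar extension.} Next I would inflate, using the hierarchical structure of HBS tilings under inflation (which should come from the $\varphi$-substitution that P2 inflation induces on stars and suns). I would look for a finite saturated caterpillar $\mathcal{C}_0$ such that the patch obtained from its supporting patch by a fixed number $k$ of inflation steps contains a saturated caterpillar $\mathcal{C}_1 \supseteq \mathcal{C}_0$. The copy of $\mathcal{C}_0$ should sit strictly inside, with the path of $\mathcal{C}_1$ extending that of $\mathcal{C}_0$ at both ends. Iterating gives a nested sequence $\mathcal{C}_0 \subseteq \mathcal{C}_1 \subseteq \cdots$ in the nested patches $P, \sigma^k(P), \sigma^{2k}(P), \dots$. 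By choosing $\mathcal{C}_0$ centered appropriately, for instance around a star fixed by the substitution, the sequence grows in both directions.

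\textbf{Step 3: full leafedness of each $\mathcal{C}_n$.} Here I would use Theorem~\ref{thm2} and Proposition~\ref{prop:formule close}. Each $\mathcal{C}_n$ is by construction a grafting of prime caterpillars, all of whose internal tiles have degree~3 except exactly one degree-$2$ tile at each graft. Counting then shows the number of leaves is $8m+1$ for $17m+1$ tiles ($m$ prime caterpillars), which matches $L_{P2}$ and gives saturation. Since the star-graph path is contained in a valid P2 patch, the local isomorphism property puts it in every P2 tiling. The limit is therefore a bi-infinite fully leafed caterpillar containing a cape~4.

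\textbf{Main obstacle.} The hard part is Step~2: finding the inflation-invariant local configuration, and proving that the path at level $n+1$ really extends the one at level $n$ while keeping the alternation property and avoiding forbidden configurations at the junctions. I would first try computer-assisted search for a cape~4-containing path segment whose endpoints lie on stars mapped to stars by a double inflation. I would then verify the finitely many junction cases by hand.
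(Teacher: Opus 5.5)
Your overall strategy is the paper's: a nested sequence of saturated caterpillars generated by inflation, seeded by a caterpillar containing a cape~4. However, Step~2 as written does not produce the sequence. Finding one $\mathcal{C}_1\supseteq\mathcal{C}_0$ inside $\sigma^k(P)$ is a one-off construction attached to $\mathcal{C}_0$ alone. $\mathcal{C}_1$ contains new pieces for which you have prescribed nothing, so ``iterating'' is undefined, and each further level would need a fresh search with no guarantee of success. The missing idea is a substitution on caterpillar blocks: a map $\psi$, defined piece by piece, that sends each block to a larger caterpillar in the $k$-fold inflation of the region supporting it. It must act on a finite family of blocks that is \emph{closed}, meaning every block occurring in an image has its own prescribed image. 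It must also be compatible at the graft tiles, so that $\psi$ of a grafting is the grafting of the images. Then $\mathcal{C}_0\subseteq\psi(\mathcal{C}_0)$, with strict growth at both ends, gives $\psi^n(\mathcal{C}_0)\subseteq\psi^{n+1}(\mathcal{C}_0)$ by induction. This is what the paper does: the seed is a Super cape~4, $k=3$, and closing the family takes $8$ constructions.

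The second unaddressed point is that every level must actually be an induced tree in a valid patch, and your reduction to star-graph paths does not give this. The three path properties you list are only necessary conditions, since the $\frac{4\pi}{5}$ lemma and the alternation proposition merely exclude configurations. The remark after the alternation proposition is a uniqueness statement about the path of an already existing saturated caterpillar, not an existence criterion. A path with admissible angles can still bring non-consecutive prime caterpillars into contact, creating extra adjacencies or cycles, or it can force an invalid patch. The paper requires the constructions to be chosen so that $\psi^n(\text{Super cape 4})$ stays connected, acyclic and embedded in a valid patch for all $n$. This is a statement at all scales that has to be reduced, through the substitution, to finitely many local checks; your Step~3 simply assumes it (``by construction a grafting'').

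Two smaller slips:
\begin{itemize}
\item A grafting of $m$ prime caterpillars has $17m+1$ tiles and $8m+2$ leaves, not $8m+1$; your count would not match $L_{P2}(17m+1)=8m+2$.
\item Local isomorphism only transports finite patches, so it cannot place the infinite caterpillar in every P2 tiling. You do not need that anyway: it suffices that all the nested patches lie in one common P2 tiling.
\end{itemize}
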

\begin{proof}
    The proof sketch is as follows. We consider an extension of a cape 4, that we call a \textbf{Super cape 4}, which is illustrated in Figure \ref{Super cape 4}. By inflating the region in which it is embedded, we obtain a new region of a P2 tiling where we can construct a larger fully leafed caterpillar that contains a Super cape 4, which we denote by $\psi(\text{Super cape 4})$ and show in Figure \ref{psi of Super cape 4}. We then proceed to find similar constructions for the new caterpillars in $\psi(\text{Super cape 4})$, and associate them with larger caterpillars in the third inflation of the tiling region where they are embedded. There are 8 such constructions to find, and they must be defined so that for all $n \in \mathbb{N}$, $\psi^n(\text{Super cape 4})$ remains connected and acyclic and is embedded into a valid patch.

\begin{figure}[h]
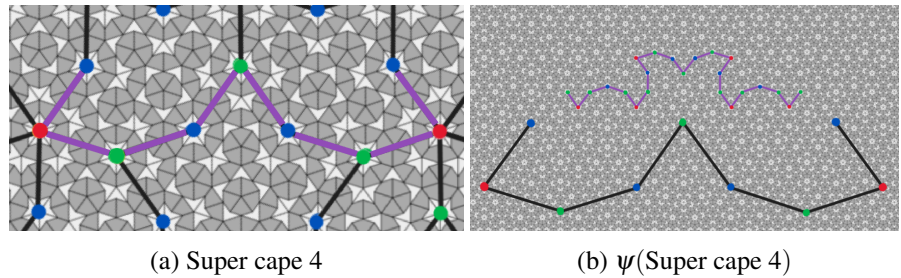

\centering
\begin{subfigure}{0.4\textwidth}
    \centering
   \includegraphics[width=\textwidth]{Figures2/Supercape4.PNG}
\caption{Super cape 4}
\label{Super cape 4} 
\end{subfigure}
\begin{subfigure}{0.376\textwidth}
    \centering
\includegraphics[width=\textwidth]{Figures2/psicape4.PNG}
\caption{$\psi(\text{Super cape 4})$}
\label{psi of Super cape 4}
\end{subfigure}
\caption{Super cape 4 and $\psi(\text{Super cape 4})$. In Figure (b), the black path is the star-graph chain of a Super cape 4 before applying three inflations and is used as a reference in the tiling and the purple path is the star-graph chain of $\psi(\text{Super cape 4})$.}
\end{figure}
    
\end{proof}

\section{Conclusion}

We now have a better understanding of the structure of fully leafed induced subtrees in P2 tilings. By Theorem \ref{thm2}, they are caterpillars up to an appendix of at most six tiles, and the saturated ones are caterpillars. We disproved Conjecture 2 from \cite{porrier2023leaf} that there is a unique bi-infinite fully leafed caterpillar in P2 tilings.

Since we now know that there exists more than one bi-infinite fully leafed caterpillar in Penrose P2 tilings, we aim to find them all. We have already eliminated several patterns. It is also possible to consider the star-graph chains of these bi-infinite structures as words over a 3-letter alphabet whose letters are either the colors of the vertices in these chains (red, green, and blue) or the angles of the prime caterpillars, and to study the properties of these words. We have begun this work in \cite{cloutier2026fullyleafedinducedsubtrees}. Of course, it would also be interesting to describe the fully leafed induced subtrees in other aperiodic tilings.\\

\noindent\textbf{Acknowledgements:} We thank Kevin Bertman for developing the website \url{https://www.mrbertman.com/penroseTilings.html}, which we used to construct patches of P2 tilings.

\nocite{*}
\bibliographystyle{eptcs}
\bibliography{bibliography}

\end{document}